\documentclass[a4paper,10pt,reqno]{amsart}
\usepackage{hyperref,amssymb,a4wide}

\newtheorem{engtheorem}{Theorem}
\newtheorem{engcorollary}[engtheorem]{Corollary}
\newtheorem{englemma}[engtheorem]{Lemma}
\newtheorem{engproposition}[engtheorem]{Proposition}
%\numberwithin{equation}{section}
\newtheorem{engproblem}[engtheorem]{Problem}

\theoremstyle{definition} 
\newtheorem{engdefinition}[engtheorem]{Definition}
\newtheorem{engexample}[engtheorem]{Example}

\newcommand{\IN}{\mathbb N}
\newcommand{\IR}{\mathbb R}
\newcommand{\IQ}{\mathbb Q}

\newcommand{\w}{\omega}

\newcommand{\Ra}{\Rightarrow}
\newcommand{\cl}{\mathrm{cl}}

%\def\hats #1{\widehat{\widehat{\hbox{$#1$}}}} \font\f=msbm10 \font\eightrm=cmr8
%%%%%%%%%%% END AUTHORS' MACROS %%%%%%%%%%%%% %%%%%%%%%%% BEGIN TEXT
%%%%%%%%%%%%%%%%%%%%%%

\usepackage[all]{xy}

%\usepackage{srcltx} % SRC Specials

%\raggedbottom

%%%%%%% Begin Topmatter %%%%%%%%%%

\title[On some functional generalizations of the regularity]{ON SOME FUNCTIONAL GENERALIZATIONS OF THE REGULARITY OF TOPOLOGICAL SPACES}
\dedicatory{Dedicated to the 60th birthday of M. M. Zarichnyi}
\author{Taras BANAKH, Bogdan BOKALO}
\address{Ivan Franko National University of Lviv, 
Universytetska Str.,  1, 79000, Lviv, Ukraine}
\email{t.o.banakh@gmail.com, b.m.bokalo.gmail.com}
\keywords{regular space, quasi-regular space, $sw$-regular space, $w\theta$-regular space, $\theta$-weakly regular space, weakly regular space, locally regular space, the Gutik hedgehog}

\begin{document}

\begin{abstract}
We introduce and study some generalizations of regular spaces, which were motivated by studying continuity properties of functions between (regular) topological spaces. In particular, we prove that a first-countable Hausdorff topological space is regular if and only if it does not contain a topological copy of the Gutik hedgehog.\end{abstract}
\maketitle

In this paper we introduce and study some generalizations of regular spaces, which were motivated by continuity properties of functions between (regular) topological spaces. First we introduce the necessary definitions.
\smallskip

A subset $U$ of a topological space $X$ is called {\em $\theta$-open} if each point $x\in U$ has a neighborhood $O_x\subset X$ such that $\bar O_x\subset U$. It is clear that each $\theta$-open set is open. Moreover, a topological space is {\em regular} if and only if each open subset of $X$ is $\theta$-open.

\begin{englemma}\label{l:t-open} Let $U$ be a $\theta$-open subset of a topological space $X$ and $V$ be a $\theta$-open subset of $U$. Then $V$ is $\theta$-open in $X$.
\end{englemma}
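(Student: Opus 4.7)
The plan is to unpack the definition of $\theta$-openness at a point and combine the two hypotheses by taking an intersection of neighborhoods. The key bookkeeping issue will be translating between closures in $U$ (used by the hypothesis ``$V$ is $\theta$-open in $U$'') and closures in $X$ (which is what we need in the conclusion).

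Fix an arbitrary point $x \in V$. The task is to produce an open neighborhood $G_x$ of $x$ in $X$ whose $X$-closure is contained in $V$. Since $V$ is $\theta$-open in the subspace $U$, there is an open set $W_x \subset X$ with $x \in W_x$ such that $\mathrm{cl}_U(W_x \cap U) \subset V$. Since $U$ is $\theta$-open in $X$ and $x \in U$, there is an open set $N_x \subset X$ with $x \in N_x$ and $\bar N_x \subset U$ (closure in $X$). Let $G_x := W_x \cap N_x$; this is an open neighborhood of $x$ in $X$.

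To finish, I would verify $\bar G_x \subset V$ by the following chain. First, $G_x \subset W_x \cap N_x \subset W_x \cap U$, since $N_x \subset \bar N_x \subset U$. Second, $\bar G_x \subset \bar N_x \subset U$, so the $X$-closure of $G_x$ already sits inside $U$. Combining these, $\bar G_x \subset \overline{W_x \cap U} \cap U = \mathrm{cl}_U(W_x \cap U) \subset V$, which is exactly what was needed.

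The only potentially confusing step is the identity $\overline{W_x \cap U} \cap U = \mathrm{cl}_U(W_x \cap U)$, but this is just the standard description of closure in a subspace. So the core idea is simply: \emph{use the $\theta$-openness of $U$ in $X$ to force the neighborhood witnessing $\theta$-openness of $V$ in $U$ to have its $X$-closure trapped inside $U$, where the two notions of closure agree}.
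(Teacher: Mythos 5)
Your proof is correct and follows essentially the same route as the paper's: both arguments intersect the neighborhood witnessing $\theta$-openness of $V$ in $U$ with a neighborhood whose $X$-closure lies in $U$, and then use the subspace closure identity $\mathrm{cl}_U(A)=\mathrm{cl}_X(A)\cap U$ to pass from the closure in $U$ to the closure in $X$. The bookkeeping with $W_x\cap U$ versus an open subset of $U$ is only a cosmetic difference.
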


\begin{proof}[\textsl{Proof}]  For each point $x\in V$, the $\theta$-openness of $U$ in $X$ yields an open neighborhood $U_x\subset X$ such that $\cl_X(U_x)\subset U$. The $\theta$-openness of $V$ in $U$ yields an open neughborhood $V_x\subset U$ such that $\cl_U(V_x)\subset U$. Now consider the open neighborhood $O_x=V_x\cap U_x$ and observe that $\cl_X(O_x)\subset \cl_X(V_x)\cap\cl_X(U_x)\subset \cl_X(V_x)\cap U=\cl_U(V_x)\subset V$.
\end{proof}

For a function $f:X\to Y$ between topological spaces by $C(f)$ we denote the set of continuity points of $f$.

\begin{engdefinition} A function $f:X\to Y$ beween topological spaces is called
\begin{itemize}
\item {\em scatteredly continuous} if for any non-empty subset $A\subset X$ the set $C(f|A)$ is not empty;
\item {\em weakly discontinuous} if if for any non-empty subset $A\subset X$ the set $C(f|A)$ has non-empty interior in $A$;
\item {\em $\theta$-weakly discontinuous} if if for any non-empty subset $A\subset X$ the set $C(f|A)$ contains a non-empty $\theta$-open subset of $A$.
\end{itemize}
\end{engdefinition}

So, we have the implications:
$$\mbox{$\theta$-weakly discontinuous $\Ra$ weakly discontinuous $\Ra$ scatteredly continuous}.$$

The first and last implications can be reversed for functions with regular domain and range, respectively.
%A topological space $X$ is {\em regular} if for any point $x\in X$ and neighborhood $U_x\subset X$ there exists a neighborhood $O_x$ of $x$ such that $\bar O_x\subset U_x$.

\begin{engtheorem}[trivial]\label{t:trivial}  A function $f:X\to Y$ from a regular topological space $X$ to a topological space $Y$  is weakly discontinuous if and only if it is $\theta$-weakly discontinuous.
\end{engtheorem}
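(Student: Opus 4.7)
The plan is to reduce the theorem to two elementary facts: $\theta$-open sets are open, and in a regular space every open set is $\theta$-open. The implication from $\theta$-weakly discontinuous to weakly discontinuous requires no hypothesis on $X$, since a non-empty $\theta$-open subset of $A$ contained in $C(f|A)$ is in particular a non-empty open subset of $A$ contained in $C(f|A)$, so it lies in the interior of $C(f|A)$ in $A$. This matches the first implication already displayed right after the definition.

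For the converse I would assume $X$ regular and $f$ weakly discontinuous, and fix an arbitrary non-empty $A\subset X$. Weak discontinuity provides a non-empty open subset $U$ of $A$ with $U\subset C(f|A)$. The key observation is that regularity is hereditary: every subspace of a regular space is regular, so $A$ itself is regular in its subspace topology. Invoking the characterization recalled in the excerpt (a space is regular iff each of its open subsets is $\theta$-open), the set $U$ is then automatically $\theta$-open in $A$, which is exactly what the definition of $\theta$-weak discontinuity demands.

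There is no real obstacle here, which is why the theorem is labelled \emph{trivial}; the only thing to make sure of is that the $\theta$-openness in the definition refers to the subspace topology on $A$ (not to the topology of $X$), so that appealing to the regularity of $A$ rather than of $X$ is the correct move. One could alternatively phrase the same argument without naming subspaces: use regularity of $X$ to thicken each point $x\in U$ with a closed-in-$X$ neighborhood inside an open subset of $X$ witnessing $U$ as open in $A$, then intersect with $A$; but the hereditary-regularity formulation is the cleanest.
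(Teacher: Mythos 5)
Your proof is correct: the forward direction is immediate since $\theta$-open sets are open, and the converse follows from heredity of regularity together with the characterization (stated in the paper just before Lemma~\ref{l:t-open}) that a space is regular iff every open set is $\theta$-open. The paper omits the proof entirely, labelling the theorem ``trivial,'' and your argument is evidently the intended one; your remark that the $\theta$-openness is taken in the subspace topology of $A$, so that hereditary regularity is the right tool, is exactly the one point worth checking.
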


\begin{engtheorem}[Bokalo]\label{t:Bokalo} A function $f:X\to Y$ from a topological space $X$ to a regular space $Y$   is scatteredly continuous if and only if it is weakly discontinuous.
\end{engtheorem}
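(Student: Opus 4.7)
The forward direction (weakly discontinuous $\Rightarrow$ scatteredly continuous) is already recorded in the implication chain preceding the theorem, so my plan concerns the converse: assuming $Y$ is regular and $f\colon X\to Y$ is scatteredly continuous, I want to show $f$ is weakly discontinuous.

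I would argue by contradiction. Suppose there is a non-empty $A\subset X$ such that $S:=C(f|A)$ has empty interior in $A$. Since $A$ is non-empty, this forces $S\neq A$, so $A\setminus S$ is non-empty, and by the hypothesis on the interior it is even dense in $A$. Scattered continuity applied to $A\setminus S$ then supplies a point $x\in C(f|(A\setminus S))$. The goal is to prove that $x$ is in fact a continuity point of $f|A$, i.e.\ $x\in S$, which contradicts $x\in A\setminus S$.

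To verify $x\in C(f|A)$, fix an open $V\ni f(x)$. Using regularity of $Y$, choose an open $W\ni f(x)$ with $\overline{W}\subset V$, and use $x\in C(f|(A\setminus S))$ to find an open neighborhood $U\ni x$ in $X$ with $f(U\cap(A\setminus S))\subset W$. The claim is $f(U\cap A)\subset \overline{W}\subset V$. For points of $U\cap(A\setminus S)$ this is immediate. For a point $a\in U\cap S$, the map $f|A$ is continuous at $a$; if $f(a)$ were outside the closed set $\overline{W}$, then continuity at $a$ would produce an open neighborhood $U'\subset U$ of $a$ with $f(U'\cap A)\subset Y\setminus\overline{W}$, but density of $A\setminus S$ in $A$ forces $U'\cap(A\setminus S)$ to be non-empty, and there $f$ already lies in $W$ --- a contradiction. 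Hence $f(a)\in\overline{W}\subset V$, completing the claim and yielding $x\in S$.

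The only non-routine step is this last closure argument, and it is where regularity of the target is essential: regularity allows one to cushion $V$ by a closed neighborhood $\overline{W}$ of $f(x)$ and thereby transport continuity data from the dense subset $A\setminus S$ onto the points of $S$. Without it, $f$ would only be controlled on $A\setminus S$, and a point of $S$ could escape into an arbitrary neighborhood of $f(x)$, so the argument would break; this is also why the statement genuinely requires $Y$ to be regular rather than merely Hausdorff.
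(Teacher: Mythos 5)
Your proof is correct. Note that the paper does not prove Theorem~\ref{t:Bokalo} internally --- it defers to \cite{AB} and \cite{BM} --- so there is no in-paper argument to compare against; what you give is the standard (Arkhangelskii--Bokalo) proof of the nontrivial direction, and every step checks out: emptiness of the interior of $S=C(f|A)$ in $A$ is exactly density of $A\setminus S$ in $A$, scattered continuity supplies $x\in C(f|(A\setminus S))$, and the regularity cushion $f(x)\in W\subset\overline{W}\subset V$ lets you transfer the estimate $f(U\cap(A\setminus S))\subset W$ to all of $U\cap A$, since a point $a\in U\cap S$ with $f(a)\notin\overline{W}$ would, by continuity of $f|A$ at $a$ and density of $A\setminus S$, produce a point of $U\cap(A\setminus S)$ mapped both into $W$ and off $\overline{W}$. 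The concluding contradiction $x\in S$ versus $x\in A\setminus S$ is sound, and your closing remark correctly isolates regularity of $Y$ as the essential hypothesis.
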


A proof the Theorem~\ref{t:Bokalo} can be found in \cite{AB}, \cite{BM}. More information on various sorts of generalized continuity can be found in \cite{Ba}--\cite{Vino}.

Motivated by Theorems~\ref{t:trivial} and \ref{t:Bokalo}, let us introduce the following definition.

\begin{engdefinition} A topological space $X$ is called
\begin{itemize}
\item  {\em $sw$-regular} if any scatteredly continuous function $f:Z\to X$ defined on a topological space $Z$ is weakly discontinuous;
\item  {\em $w\theta$-regular} if any weakly discontinuous function $f:X\to Y$ to any topological space $Y$ is $\theta$-weakly discontinuous.
\end{itemize}
\end{engdefinition}

Theorems~\ref{t:trivial} and \ref{t:Bokalo} imply that each regular space is $sw$-regular and $w\theta$-regular.

The following theorem characterizes $w\theta$-regular spaces.

\begin{engtheorem}\label{t:wt-char} A topological space $X$ is $w\theta$-regular if and only if for each subspace $A\subset X$, each non-empty open subset $U\subset A$ contains a non-empty $\theta$-open subset of $A$.
\end{engtheorem}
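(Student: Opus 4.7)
The plan is to prove the two implications separately.

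For the ``if'' direction I argue directly: if the subspace condition holds and $f\colon X\to Y$ is weakly discontinuous, then for any non-empty $B\subset X$ the $B$-interior $V$ of $C(f|B)$ is a non-empty open subset of $B$, so applying the hypothesis with $A=B$ and $U=V$ yields a non-empty $\theta$-open subset $W\subset V$ of $B$; since $W\subset V\subset C(f|B)$, $f$ is $\theta$-weakly discontinuous.

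For the ``only if'' direction I argue by contraposition. Assume some $A_0\subset X$ carries a non-empty open $U_0$ with no non-empty $\theta$-open subset of $A_0$; I will build a weakly discontinuous $f\colon X\to Y$ that is not $\theta$-weakly discontinuous. The construction relies on two reductions. First, I replace $A_0$ with $A_1:=\overline{A_0}^{\,X}$ and $U_0$ with $U_1:=A_1\cap V$, where $V\subset X$ is open with $U_0=A_0\cap V$. Any non-empty $\theta$-open $W\subset U_1$ of $A_1$ would restrict to a non-empty $\theta$-open subset $W\cap A_0$ of $A_0$ contained in $U_0$, where non-emptiness uses the density of $A_0$ in $A_1$ and $\theta$-openness uses that $A_1$ is closed in $X$, making the relevant $A_1$-closures of subsets of $A_0$ coincide with their $X$-closures; this contradicts the hypothesis on $A_0,U_0$. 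Second, I replace $A_1$ by $A:=\overline{U_1}^{\,A_1}$ and keep $U:=U_1$; then $U$ is open and dense in $A$, $A$ remains closed in $X$, and a transitivity argument in the style of Lemma~\ref{l:t-open} shows that any non-empty $\theta$-open subset of $A$ contained in $U$ would already be $\theta$-open in $A_1$, contradicting what was just established. So, without loss of generality, $A$ is closed in $X$ and $U$ is open and dense in $A$.

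With these reductions I take $Y=\{0,1\}$ with the Sierpinski topology in which $\{1\}$ is open, and define $f(x)=1$ if $x\in A\setminus U$ and $f(x)=0$ otherwise. Automatic continuity at $f=0$, together with the density of $U$ in $A$ (which makes $A\setminus U$ have empty interior in $A$), gives $C(f|A)=U$; by hypothesis $U$ has no non-empty $\theta$-open subset of $A$, so $B=A$ witnesses that $f$ is not $\theta$-weakly discontinuous. Weak discontinuity of $f$ on $X$ splits into three cases: if $B\subset A$ and $B\cap U\neq\emptyset$, then $B\cap U$ is a non-empty $B$-open subset of $C(f|B)$; if $B\subset A\setminus U$, then $C(f|B)=B$; and if $B\not\subset A$, then, crucially because $A$ is closed in $X$, $B\setminus A=(X\setminus A)\cap B$ is a non-empty $B$-open subset of $C(f|B)$. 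The main obstacle I anticipate is the bookkeeping in the two reductions; once they are in place the closedness of $A$ in $X$ is exactly what is needed to make the Sierpinski construction work in the case $B\not\subset A$.
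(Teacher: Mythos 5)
Your proof is correct, but it is organized differently from the paper's. The ``if'' direction coincides with the paper's. For ``only if'', the paper argues directly: for given $A$ and $U$ it forms $\tilde U=\overline{A}\setminus\overline{A\setminus U}$, takes $Y=\tilde U\oplus(X\setminus\tilde U)$ with the identity map $f:X\to Y$ (weakly discontinuous since $\tilde U$ is open and $X\setminus\tilde U$ is closed), applies the resulting $\theta$-weak discontinuity to the single subset $\mathrm{cl}_{\overline{A}}(U)$, and then shows that the $\theta$-open set $V$ so obtained lies in $\tilde U$, is $\theta$-open in $\overline{A}$, and meets $A$ in the desired set. You instead argue by contraposition, first performing two reductions (pass to $\overline{A_0}$, then to the closure of $U_1$ there) to reach a closed $A\subset X$ with $U$ open and dense in $A$ and still admitting no non-empty $\theta$-open subset of $A$ inside $U$, and then use the characteristic function of $A\setminus U$ into the Sierpi\'nski doubleton. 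Both routes turn on the same closure bookkeeping --- your two reductions are exactly the paper's implicit passage from $A$ to $\overline{A}$ and then to $\mathrm{cl}_{\overline{A}}(U)$, and in both cases closedness is what makes $\theta$-openness transfer between the subspace and the ambient space. What your version buys is a fixed, very simple target space (the Sierpi\'nski doubleton, independent of $X$) and a transparent three-case verification of weak discontinuity; what it costs is the contrapositive framing and the need to justify the two reductions up front, whereas the paper extracts the $\theta$-open subset of $A$ in one pass. One presentational caveat: your appeal to ``a transitivity argument in the style of Lemma~\ref{l:t-open}'' for the second reduction is slightly misleading, since $A=\mathrm{cl}_{A_1}(U_1)$ is closed, not $\theta$-open, in $A_1$; the correct justification is that a $\theta$-open subset $W$ of $A$ contained in $U_1$ is open in $A_1$ (being of the form $G\cap U_1$ with $G$ open in $A_1$) and that $A_1$-closures of subsets of the closed set $A$ agree with their $A$-closures. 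With that point spelled out, the argument is complete.
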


\begin{proof}[\textsl{Proof}] To prove the ``if'' part, assume that  for each  subspace $A\subset X$, every non-empty open subset $U\subset A$ contains a non-empty $\theta$-open subset of $A$. To show that the space $X$ is $w\theta$-regular, fix any weakly discontinuous map $f:X\to Y$. To show that $f$ is $\theta$-weakly discontinuous, take any non-empty subset $A\subset X$. Since $f$ is weakly discontinuous, there exists a non-empty open subset $U\subset A$ such that $f|U$ is continuous. By our assumption, $U$ contains a $\theta$-open subspace $V$ of $A$.  Since $f|V$ is continuous, the function $f$ is $\theta$-weakly discontinuous.

Now we prove the ``only if'' part. Assume that the space $X$ is $w\theta$-regular. Given any subset $A\subset X$ and a non-empty open subset $U\subset A$, consider the
closures $\bar A$ and $\overline{A\setminus U}$ of the sets $A$ and $A\setminus U$ in $X$. Observe that $\tilde U:=\bar A\setminus \overline{A\setminus U}$ is an open set in $\bar A$ with $\tilde U\cap A=U$ and $\tilde U\subset \overline{U}$. Consider the topological sum $Y=\tilde U\oplus (X\setminus \tilde U)$ and observe that the identity map $f:X\to Y$ is weakly discontinuous. The $w\theta$-regularity of the space $X$ ensures that $f$ is $\theta$-weakly discontinuous. Consequently, the closure $\bar U$ of $U$ in $\bar A$ contains a non-empty $\theta$-open subset $V\subset\bar U$ such that $f|V$ is continuous. The continuity of $f|V$ ensures that $V\subset \tilde U$. We claim that $V$ is $\theta$-open in $\bar A$. Since $V$ is $\theta$-open in $\bar U$, for any $x\in V$ there exists a neighborhood $O_x$ of $x$ such that $O_x$ is open in $\bar U$ and $O_x\subset \overline{O}_x\subset V\subset\tilde U$. So, $O_x$ is open in $\tilde U$ and hence is open in $\bar A$.

Taking into account that $V$ is a non-empty $\theta$-open subset of $\bar A$, we conclude that $V\cap A\subset \tilde U\cap A=U$ is a non-empty $\theta$-open subset of $A$, contained in the set $U$.
\end{proof}

\begin{engproblem} Characterize topological spaces, which are $sw$-regular.
\end{engproblem}

We shall prove that $sw$-regular and $w\theta$-regular spaces are preserved by $\theta$-weak homeomorphisms.

\begin{engdefinition} A bijective function $f:X\to Y$ between topological spaces is called a ($\theta$-){\em weak homeomorphism} if both functions $f$ and $f^{-1}$ are ($\theta$-)weakly discontinuous.
\end{engdefinition}

We shall need the following proposition describing the continuity properties of compositions of scatteredly continuous, weakly discontinuous and $\theta$-weakly discontinuous  functions.

\begin{engproposition}\label{p:composition} Let $f:X\to Y$ and $g:Y\to Z$ be two functions between topological spaces.
\begin{enumerate}
\item If $f,g$ are weakly discontinuous, then  $g\circ f$ is weakly discontinuous.
\item If $f,g$ are $\theta$-weakly discontinuous, then  $g\circ f$ is $\theta$-weakly discontinuous.
\item If $f$ is weakly discontinuous and $g$ is scatteredly continuous, then $g\circ f$ is scatteredly continuous.
\item If $f$ is scatteredly continuous and $g$ is $\theta$-weakly discontinuous, then $g\circ f$ is scatteredly continuous.
\end{enumerate}
\end{engproposition}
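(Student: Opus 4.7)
Parts (1) and (2) follow by chaining witnesses. Given a non-empty $A\subset X$, use the weak (resp.\ $\theta$-weak) discontinuity of $f$ to find a non-empty open (resp.\ $\theta$-open) $U\subset A$ on which $f|U$ is continuous, then apply the analogous property of $g$ to $f(U)\subset Y$ to obtain a non-empty open (resp.\ $\theta$-open) $V\subset f(U)$ with $g|V$ continuous. The set $W:=f^{-1}(V)\cap U$ is then a non-empty subset of $A$ on which $g\circ f$ is continuous. It is open in $A$ in case (1) since continuous preimages of open sets are open and $U$ is open in $A$; and $\theta$-open in $A$ in case (2) using the analogous fact that the continuous preimage of a $\theta$-open set is $\theta$-open (which follows directly from the definition by taking closures through $f$), together with Lemma~\ref{l:t-open} on transitivity of $\theta$-openness.

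Part (3) is similar, but with scattered continuity of $g$ replacing weak discontinuity. Given non-empty $A\subset X$, use weak discontinuity of $f$ to get an open $U\subset A$ with $f|U$ continuous, then apply scattered continuity of $g$ to $f(U)$ to obtain $y\in C(g|f(U))$, and choose any $x\in U\cap f^{-1}(y)$. Since $f|U$ is continuous at $x$ and $g|f(U)$ is continuous at $y=f(x)$, their composition $(g\circ f)|U$ is continuous at $x$, and since $U$ is open in $A$ this upgrades to continuity of $(g\circ f)|A$ at $x$, giving $x\in C((g\circ f)|A)$.

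Part (4) is the most delicate. Given non-empty $A\subset X$, set $B:=f(A)$ and apply $\theta$-weak discontinuity of $g$ to obtain a non-empty $\theta$-open $V\subset B$ with $g|V$ continuous. Put $A':=f^{-1}(V)\cap A$ and use scattered continuity of $f$ to select $x_0\in C(f|A')$, writing $y_0:=f(x_0)\in V$. The plan is to show $x_0\in C((g\circ f)|A)$ by using the $\theta$-openness of $V$ in $B$ crucially: given any open $M\ni g(y_0)$ in $Z$, combine a continuity witness $N\subset Y$ of $g|V$ at $y_0$ (so that $g(N\cap V)\subset M$) with a $\theta$-openness witness $O^*\subset Y$ of $V$ at $y_0$ (so that $\cl_Y(O^*\cap B)\cap B\subset V$, and in particular $O^*\cap B\subset V$) and a continuity witness $W\subset X$ of $f|A'$ at $x_0$ (giving $f(W\cap A')\subset N\cap O^*$). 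The $\theta$-openness condition forces $f$-values of points in $A\setminus A'$ to avoid $O^*$ (since such values lie in $B\setminus V\subset Y\setminus\cl_Y(O^*\cap B)$); combining this control with the scattered continuity of $f$, one derives $g(f(W\cap A))\subset M$, establishing continuity of $(g\circ f)|A$ at $x_0$.

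The main obstacle I foresee is part (4): bridging from continuity of $f|A'$ at $x_0$ to continuity of $(g\circ f)|A$ at $x_0$, even though $f|A$ itself need not be continuous at $x_0$. The $\theta$-openness of $V$ in $B$ — strictly stronger than mere openness — is the essential ingredient that prevents discontinuities of $f$ on $A\setminus A'$ from disrupting the behaviour of $g\circ f$ at $x_0$; the rest of the proposition is routine composition bookkeeping.
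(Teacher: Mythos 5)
Parts (1)--(3) of your plan are correct and coincide with the paper's argument (chaining open/$\theta$-open witnesses, using Lemma~\ref{l:t-open} and the fact that continuous preimages of $\theta$-open sets are $\theta$-open).

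Part (4) has a genuine gap, precisely at the step you flag as the obstacle. From $x_0\in C(f|A')$ with $A'=f^{-1}(V)\cap A$ you cannot conclude $x_0\in C((g\circ f)|A)$: your $\theta$-openness witness $O^*$ only tells you that points $a\in A\setminus A'$ satisfy $f(a)\notin O^*$, and this gives no control whatsoever over $g(f(a))$. In the final inclusion $g(f(W\cap A))\subset M$ the points of $W\cap(A\setminus A')$ are simply unaccounted for; ``scattered continuity of $f$'' does not repair this, since such points may accumulate at $x_0$. Concretely: let $X=\{0\}\cup\{1/n:n\in\IN\}\subset\IR$, let $Y=\{p\}\cup\{q_n:n\in\IN\}$ be discrete, $f(0)=p$, $f(1/n)=q_n$ (scatteredly continuous, since every $1/n$ is isolated), let $Z=\{0,1\}$ be discrete, $g(p)=0$, $g(q_n)=1$ (continuous, hence $\theta$-weakly discontinuous). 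For $A=X$ a perfectly legitimate witness is $V=\{p\}$, which is $\theta$-open in $B=f(A)$ with $g|V$ continuous; then $A'=\{0\}$, $x_0=0$, yet $(g\circ f)(1/n)=1\ne0=(g\circ f)(0)$ with $1/n\to 0$, so $x_0\notin C((g\circ f)|A)$. The conclusion of (4) still holds (every $1/n$ is a continuity point); your recipe just selects the wrong point, and nothing in the plan forces a better choice of $V$ or $x_0$.

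What is actually needed is a point $x\in C(f|A)$ --- a continuity point of $f$ on \emph{all} of $A$ --- admitting a neighborhood $O_x$ with $f(O_x\cap A)\subset V$; then continuity of $g|V$ at $f(x)$ finishes. Producing such a point is the real content of (4), and the paper does it by a transfinite device you are missing: set $A_0=A$, $A_{\alpha+1}=A_\alpha\setminus C(f|A_\alpha)$, pass to the open set $W=A\setminus\overline{A_\delta}$ where $\delta$ is the first stage at which $A_\delta$ is not dense in $A$ (so each $W\cap C(f|A_\alpha)$, $\alpha<\delta$, is dense in $W$), apply the $\theta$-weak discontinuity of $g$ to $f(W)$, take the \emph{least} $\gamma$ with $W\cap C(f|A_\gamma)\cap f^{-1}(V)\ne\emptyset$, and use the closed neighborhood $\overline{O}_{f(x)}\subset V$ supplied by $\theta$-openness to show by a density argument that $\gamma=0$. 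Some substitute for this minimality-plus-density step is required; without it part (4) remains unproved.
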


\begin{proof}[\textsl{Proof}] 1. Assume that $f,g$ are weakly discontinuous. To prove that $g\circ f$ is weakly discontinuous, we need to show that for any non-empty subset $A\subset X$ the set $C(g\circ f|A)$ has non-empty interior in $A$. By the weak discontinuity of $f$, the set $C(f|A)$ contains a non-empty open subset $U\subset A$. By the weak discontinuity of $g$, the set $C(g|f(U))$ contains a non-empty open set $V\subset f(U)$. By the continuity of $f|U$, the set $W=(f|U)^{-1}(V)$ is open in $U$ and hence open in $A$. Since $f(W)\subset V$, the continuity of the restrictions $f|W$ and $g|V$ implies the continuity of the restriction $g\circ f|W$. So, $W\subset C(g\circ f|A)$.
\smallskip

2. Assume that $f,g$ are $\theta$-weakly discontinuous. To prove that $g\circ f$ is $\theta$-weakly discontinuous, we need to show that for any non-empty subset $A\subset X$ the set $C(g\circ f|A)$ contains a non-empty $\theta$-open subset $W\subset A$. By the $\theta$-weak discontinuity of $f$, the set $C(f|A)$ contains a non-empty $\theta$-open subset $U\subset A$. By the $\theta$-weak discontinuity of $g$, the set $C(g|f(U))$ contains a non-empty $\theta$-open set $V\subset f(U)$. By the continuity of $f|U$, the set $W=(f|U)^{-1}(V)$ is $\theta$-open in $U$ and hence $\theta$-open in $A$, by Lemma~\ref{l:t-open}. Since $f(W)\subset V$, the continuity of the restrictions $f|W$ and $g|V$ implies the continuity of the restriction $g\circ f|W$. Now we see that the set $C(g\circ f|A)$ contains the non-empty $\theta$-open subset $W$ of $A$, witnessing that $g\circ f$ is $\theta$-weakly discontinuous.
\smallskip

3. Assume that $f$ is weakly discontinuous and $g$ is scatteredly continuous. To prove that $g\circ f$ is scatteredly continuous, we need to show that for any non-empty subset $A\subset X$ the function $g\circ f|A$ has a continuity point. By the weak discontinuity of $f$, the set $C(f|A)$ contains a non-empty open subset $U\subset A$. By the scattered continuity of $g$, the function $g|f(U)$ has a continuity point $y$. Then any point $x\in U\cap f^{-1}(y)$ is a continuity point of the restriction $g\circ f|A$.
\smallskip

4. Assume that $f$ is scatteredly continuous and $g$ is $\theta$-weakly discontinuous. Given a non-empty subset $A\subset X$, we need to show that the restriction $g\circ f|A$ has a continuity point. Let $A_0:=A$ and $A_\alpha:=\bigcap_{\beta<\alpha}A_\beta\setminus C(f|A_\beta)$ for any non-zero ordinal $\alpha$. In particular, $A_{\alpha+1}=A_\alpha\setminus C(f|A_\alpha)$ for any ordinal $\alpha$.

Let $\delta$ be the smallest ordinal such that $A_\delta$ is not dense in $A$ and let $W=A\setminus\overline{A}_\delta$. It follows that $W=\bigcup_{\alpha<\delta}W\cap C(f|A_\alpha)$ and each set $W\cap C(f|A_\alpha)$ is dense in $W$ (by the scattered continuity of $f$).

Since the function $g$ is $\theta$-weakly discontinuous, the set $C(g|f(W))$ contains a non-empty $\theta$-open subset $V\subset f(W)$. Since $W=\bigcup_{\alpha<\delta}W\cap C(f|A_\alpha)$, we can choose the smallest ordinal $\gamma<\delta$ such that $W\cap C(f|A_\gamma)\cap f^{-1}(V)\ne\emptyset$. Choose a point $x\in W\cap C(f|A_\gamma)\cap f^{-1}(V)$.
Since the set $V$ is $\theta$-open in $f(W)$, the point $f(x)\in V$ has a closed neighborhood $\bar O_{f(x)}\subset f(W)$ such that $\bar O_{f(x)}\subset V$. By the continuity of the map $f|A_\gamma$ at $x$, there exists an open neighborhood $O_x\subset W$ of $x$ such that $f(O_x\cap A_\gamma)\subset \bar O_{f(x)}\subset V$.

We claim that $\gamma=0$. To derive a contradiction, assume that $\gamma>0$. In this case $W\cap C(f|A_0)\cap f^{-1}(V)=\emptyset$ and hence $x\notin C(f|A_0)=C(f|A)$. By the density of $C(f|A)$ in $A$, there exists a point $z\in O_x\cap C(f|A)$. It follows that $f(z)\in W\setminus V\subset W\setminus \bar O_{f(x)}$. By the continuity of $f|W$ at $z$, there exists an open neighborhood $O_z\subset O_x$ such that $f(O_z)\subset f(W)\setminus \bar O_{f(x)}$. Then $$f(O_z\cap A_\gamma)=f(O_z\cap O_x\cap A_\gamma)\subset f(O_z)\cap f(O_x\cap A_\gamma)\subset (f(W)\setminus\bar O_{f(x)})\cap\bar O_{f(x)}=\emptyset$$ and hence $O_z\cap A_\gamma=\emptyset$, which contradicts the density of $A_\gamma$ in $A$.
This contradiction shows that $\gamma=0$ and hence $x\in C(f|A_\gamma)=C(f|A)$ is a continuity point of $f|A$ with $f(O_x)\subset V$. The continuity of the restriction $g|V$ implies that $g\circ f|A$ is continuous at $x$. So, $g\circ f|A$ has a continuity point.
\end{proof}

\begin{engtheorem} A topological space $X$ is $sw$-regular if there exists a $\theta$-weakly discontinuous bijective function $h:X\to Y$ to an $sw$-regular space $Y$ such that $h^{-1}$ is weakly discontinuous.
\end{engtheorem}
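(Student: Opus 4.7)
The plan is to reduce the $sw$-regularity of $X$ to that of $Y$ by sandwiching an arbitrary scatteredly continuous $f:Z\to X$ between the two halves of the hypothesis and then invoking the composition properties in Proposition~\ref{p:composition}. So fix a topological space $Z$ and a scatteredly continuous function $f:Z\to X$; our goal is to show that $f$ is weakly discontinuous.

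First I would form the composition $h\circ f:Z\to Y$. Since $f$ is scatteredly continuous and $h$ is $\theta$-weakly discontinuous, Proposition~\ref{p:composition}(4) gives that $h\circ f$ is scatteredly continuous. By the $sw$-regularity of $Y$ applied to the map $h\circ f:Z\to Y$, it follows that $h\circ f$ is weakly discontinuous.

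Next, write $f=h^{-1}\circ(h\circ f)$. Here $h\circ f$ is weakly discontinuous by the previous step, and $h^{-1}$ is weakly discontinuous by hypothesis. Proposition~\ref{p:composition}(1) then yields that $f$ is weakly discontinuous, completing the proof.

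There is no real obstacle: the statement is essentially a bookkeeping exercise in the composition calculus of Proposition~\ref{p:composition}. The only thing worth flagging is that one needs the two different parts of that proposition (parts (4) and (1)) at the two different stages, together with the fact that the hypothesis gives $h$ a stronger property ($\theta$-weakly discontinuous) than $h^{-1}$ (merely weakly discontinuous) — precisely the strength needed to pass a scatteredly continuous map across $h$ in the first step.
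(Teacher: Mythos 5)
Your argument is correct and is essentially identical to the paper's own proof: both apply Proposition~\ref{p:composition}(4) to get that $h\circ f$ is scatteredly continuous, invoke the $sw$-regularity of $Y$, and then use Proposition~\ref{p:composition}(1) on the factorization $f=h^{-1}\circ(h\circ f)$. No gaps.
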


\begin{proof}[\textsl{Proof}] 
To show that $X$ is $sw$-regular, we need to show that each scatteredly continuous function $f:Z\to X$ is weakly discontinuous. By Proposition~\ref{p:composition}(4), the composition $h\circ f:Z\to Y$ is scatteredly continuous. Since $Y$ is $sw$-regular, the function $h\circ f$ is weakly discontinuous. By Proposition~\ref{p:composition}(1), the  composition $h^{-1}\circ h\circ f=f$ is weakly discontinuous.
\end{proof}

\begin{engtheorem} 
A topological space $X$ is $w\theta$-regular if there exists a $\theta$-weakly discontinuous bijective function $h:X\to Y$ to a $w\theta$-regular space $Y$ such that $h^{-1}$ is weakly discontinuous.
\end{engtheorem}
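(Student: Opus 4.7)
The plan is to follow the same two-step transport argument used in the previous theorem on $sw$-regularity, only with ``scattered continuity'' replaced by ``weak discontinuity'' and ``weak discontinuity'' replaced by ``$\theta$-weak discontinuity'' throughout. Concretely, to verify that $X$ is $w\theta$-regular, I would start with an arbitrary weakly discontinuous function $f:X\to Z$ to a topological space $Z$ and show that $f$ is $\theta$-weakly discontinuous.

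First, I would push $f$ through $h^{-1}$ to land in $Y$. Since by hypothesis $h^{-1}:Y\to X$ is weakly discontinuous and $f:X\to Z$ is weakly discontinuous, Proposition~\ref{p:composition}(1) gives that the composition $f\circ h^{-1}:Y\to Z$ is weakly discontinuous. The assumed $w\theta$-regularity of $Y$ then upgrades this to: $f\circ h^{-1}$ is $\theta$-weakly discontinuous.

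Second, I would pull the result back to $X$ along $h$. Now $h:X\to Y$ is $\theta$-weakly discontinuous by hypothesis, and we have just seen that $f\circ h^{-1}:Y\to Z$ is $\theta$-weakly discontinuous, so Proposition~\ref{p:composition}(2) applied to these two maps yields that $(f\circ h^{-1})\circ h = f$ is $\theta$-weakly discontinuous, which is exactly what we need.

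I do not anticipate a genuine obstacle here: the whole argument is a bookkeeping exercise in invoking the appropriate clauses of Proposition~\ref{p:composition}. The only point worth a second of care is choosing the correct clauses (namely (1) for the first composition, since at that stage only weak discontinuity is available, and (2) for the second composition, since both factors have been brought to the $\theta$-level); using (2) in the first step would require $\theta$-weak discontinuity of $h^{-1}$, which is \emph{not} assumed.
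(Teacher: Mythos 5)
Your proposal is correct and follows exactly the same route as the paper's own proof: compose $f$ with $h^{-1}$, apply Proposition~\ref{p:composition}(1) to get weak discontinuity of $f\circ h^{-1}$, invoke the $w\theta$-regularity of $Y$, and then recover $f=(f\circ h^{-1})\circ h$ via Proposition~\ref{p:composition}(2). Your closing remark about why clause (2) cannot be used in the first step is a sensible observation, consistent with the asymmetric hypotheses of the theorem.
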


\begin{proof}[\textsl{Proof}] 
To see that $X$ is $w\theta$-regular, we need to show that each weakly discontinuous function $f:X\to Z$ is $\theta$-weakly discontinuous. By Proposition~\ref{p:composition}(1), the composition $f\circ h^{-1}:Y\to Z$ is weakly discontinuous. Since $Y$ is $w\theta$-regular, the function $f\circ h^{-1}$ is $\theta$-weakly discontinuous. By Proposition~\ref{p:composition}(2), the  composition $f\circ h^{-1}\circ h=f$ is $\theta$-weakly discontinuous.
\end{proof}

\begin{engcorollary}\label{c:homo} 
The classes of $sw$-regular and $w\theta$-regular spaces are preserved by $\theta$-weak homeomorphisms.
\end{engcorollary}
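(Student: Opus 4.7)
The plan is to derive this corollary as an immediate consequence of the two preceding theorems, once we unpack what a $\theta$-weak homeomorphism gives us. First I would observe that if $h \colon X \to Y$ is a $\theta$-weak homeomorphism, then by definition both $h$ and $h^{-1}$ are $\theta$-weakly discontinuous bijections. Since the implication chain stated after the definition of the three continuity notions tells us that $\theta$-weakly discontinuous functions are in particular weakly discontinuous, the map $h^{-1}$ (and symmetrically $h$) is automatically also weakly discontinuous.

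With this observation, the hypotheses of the two previous theorems are satisfied in both directions. More precisely, to show that the class of $sw$-regular spaces is preserved, I would argue in two symmetric steps: if $Y$ is $sw$-regular, then the bijection $h \colon X \to Y$ is $\theta$-weakly discontinuous and $h^{-1}$ is weakly discontinuous, so the first of the two preceding theorems yields that $X$ is $sw$-regular; conversely, if $X$ is $sw$-regular, apply the same theorem to the bijection $h^{-1} \colon Y \to X$, whose inverse $h$ is weakly discontinuous, to conclude that $Y$ is $sw$-regular. The argument for preservation of $w\theta$-regularity is identical, invoking the second of the two preceding theorems instead.

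There is no real obstacle here: the preceding two theorems were stated in the asymmetric form ``$h$ is $\theta$-weakly discontinuous and $h^{-1}$ is weakly discontinuous'' precisely so that the symmetric notion of $\theta$-weak homeomorphism plugs into them in either direction. The only point worth emphasizing in the write-up is the observation that ``$\theta$-weak homeomorphism'' is a strictly stronger notion than what either theorem individually requires, which is why the corollary follows in both directions and therefore the class of spaces in question is indeed invariant under this equivalence.
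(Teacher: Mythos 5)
Your proposal is correct and follows exactly the route the paper intends: the corollary is stated without proof precisely because it is the two preceding theorems applied in both directions, using the observation that $\theta$-weak discontinuity implies weak discontinuity so that a $\theta$-weak homeomorphism satisfies their hypotheses symmetrically. Nothing is missing.
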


\begin{engdefinition} 
A topological space $X$ is called ($\theta$-){\em weakly regular} if it is ($\theta$-)weakly homeomorphic to a regular topological space.
\end{engdefinition}

\begin{engexample}\label{ex:Bokalo} 
Consider the real line $\IR$ endowed with the second-countable topology $\tau$ generated by the subbase $$\{\IQ\}\cup\{(-\infty,a),(a,+\infty):a\in\IR\}.$$
It can be shown that the topological space $X=(\IR,\tau)$ is weakly regular. The identity map $\IR\to X$ is scatteredly continuous but not weakly discontinuous, which implies that the space $X$ is not $sw$-regular. On the other hand, the function $\chi:X\to\{0,1\}\subset\IR$ defined by $$\chi(x)=\begin{cases}
1&\mbox{if $x\in\IQ$};\\
0&\mbox{if $x\in\IR\setminus\IQ$;}
\end{cases}
$$is weakly discontinuous but not $\theta$-weakly discontinuous, witnessing that the space $X$ is not $w\theta$-regular. Theorem~\ref{t:wr=>sw+twr} implies that the space $X$ is not $\theta$-weakly regular.
\end{engexample}

Theorem~\ref{t:trivial}, \ref{t:Bokalo} and Corollary~\ref{c:homo} imply:

\begin{engtheorem}\label{t:wr=>sw+twr} 
Each $\theta$-weakly regular space is $sw$-regular and $w\theta$-regular.
\end{engtheorem}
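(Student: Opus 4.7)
The plan is to combine the definition of $\theta$-weak regularity with the previously established preservation result for $\theta$-weak homeomorphisms. By hypothesis, $X$ is $\theta$-weakly homeomorphic to some regular topological space $Y$, meaning there exists a bijection $h : X \to Y$ such that both $h$ and $h^{-1}$ are $\theta$-weakly discontinuous, and $Y$ is regular.

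My first step is to verify that the target space $Y$ is itself both $sw$-regular and $w\theta$-regular. This is exactly the content of the remark following the definition of $sw$- and $w\theta$-regularity: Theorem~\ref{t:Bokalo} says that a scatteredly continuous function into a regular space is weakly discontinuous (so $Y$ is $sw$-regular), and Theorem~\ref{t:trivial} says that a weakly discontinuous function out of a regular space is $\theta$-weakly discontinuous (so $Y$ is $w\theta$-regular).

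Next I would transfer both properties from $Y$ back to $X$ via the $\theta$-weak homeomorphism $h$. This is precisely what Corollary~\ref{c:homo} provides: the classes of $sw$-regular and $w\theta$-regular spaces are preserved by $\theta$-weak homeomorphisms. Applying this to the inverse of $h$ (viewed as a $\theta$-weak homeomorphism from the $sw$- and $w\theta$-regular space $Y$ onto $X$) immediately yields that $X$ is $sw$-regular and $w\theta$-regular.

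There is no genuine obstacle here, since the hard work has already been done in proving Proposition~\ref{p:composition} and deducing Corollary~\ref{c:homo}; the present theorem is a direct synthesis of that corollary with Theorems~\ref{t:trivial} and~\ref{t:Bokalo}. The only thing to be mildly careful about is to note that a $\theta$-weakly discontinuous function is in particular weakly discontinuous, so a $\theta$-weak homeomorphism is a weak homeomorphism, ensuring that Corollary~\ref{c:homo} applies as stated.
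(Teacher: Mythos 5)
Your proposal is correct and is essentially the paper's own argument: the paper derives this theorem directly from Theorems~\ref{t:trivial} and~\ref{t:Bokalo} (which give that a regular space is $sw$-regular and $w\theta$-regular) together with Corollary~\ref{c:homo} (preservation under $\theta$-weak homeomorphisms). Your closing remark is unnecessary since Corollary~\ref{c:homo} is stated for $\theta$-weak homeomorphisms directly, but it is harmless.
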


\begin{engtheorem}\label{t:twr} 
A topological space $X$ is $\theta$-weakly regular if and only if each non-empty (closed) subspace $A\subset X$ contains a non-empty $\theta$-open regular subspace.
\end{engtheorem}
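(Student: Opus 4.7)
The plan is to prove the two directions separately. The ``only if'' direction will trace a $\theta$-weak homeomorphism to locate a regular $\theta$-open piece inside any prescribed subspace, while the ``if'' direction will build a $\theta$-weak homeomorphism to a regular space by transfinite partitioning.

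For the ``only if'' direction, assume there is a $\theta$-weak homeomorphism $h:X\to Y$ with $Y$ regular, and fix a non-empty $A\subset X$. Applying $\theta$-weak discontinuity of $h$ on $A$ yields a non-empty $\theta$-open $U\subset A$ with $h|U$ continuous; applying $\theta$-weak discontinuity of $h^{-1}$ on $h(U)$ yields a non-empty $\theta$-open-in-$h(U)$ subset $V\subset h(U)$ with $h^{-1}|V$ continuous. I would set $U':=U\cap h^{-1}(V)$, observe that $h|U':U'\to V$ is then a homeomorphism (both directions continuous and mutually inverse), and note that $V$ is regular as a subspace of the regular space $Y$, so $U'$ is regular. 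The less routine step is to verify that $U'$ is $\theta$-open in $U$, since then by Lemma~\ref{l:t-open} it will also be $\theta$-open in $A$: for $x\in U'$ pick an open-in-$h(U)$ neighborhood $O$ of $h(x)$ with $\cl_{h(U)}(O)\subset V$; then $(h|U)^{-1}(O)$ is an open-in-$U$ neighborhood of $x$, and continuity of $h|U$ transports the closure through to $V$, forcing $\cl_U((h|U)^{-1}(O))\subset U'$.

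For the ``if'' direction I would construct a transfinite partition. Set $F_0:=X$ and, at a successor stage where $F_\alpha$ is a non-empty closed subspace of $X$, use the hypothesis to pick a non-empty $\theta$-open regular subspace $R_\alpha\subset F_\alpha$ and let $F_{\alpha+1}:=F_\alpha\setminus R_\alpha$; at limit stages take intersections. Each $F_\alpha$ remains closed in $X$ (since $R_\alpha$ is open in the closed set $F_\alpha$), so the hypothesis keeps applying, and for cardinality reasons the process terminates at some $\lambda$ with $\{R_\alpha\}_{\alpha<\lambda}$ partitioning $X$. Let $Y:=\bigoplus_{\alpha<\lambda}R_\alpha$ be the topological sum, which is regular, and define $h:X\to Y$ as the identity on underlying sets. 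For any non-empty $A\subset X$, taking the least $\alpha$ with $A\cap R_\alpha\ne\emptyset$ gives $A\subset F_\alpha$, so the $\theta$-openness of $R_\alpha$ in $F_\alpha$ restricts to make $A\cap R_\alpha$ a non-empty $\theta$-open subset of $A$ on which $h$ is continuous; symmetrically, for any non-empty $B\subset Y$, the least $\alpha$ with $B\cap R_\alpha\ne\emptyset$ makes $B\cap R_\alpha$ clopen (hence $\theta$-open) in $B$ with $h^{-1}$ continuous there.

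The main obstacle I expect is the closure-transport step in the ``only if'' direction; the transfinite construction is routine provided one carefully maintains closedness of the $F_\alpha$'s so that the hypothesis remains applicable at each stage. The parenthetical ``closed'' in the hypothesis is in fact explained by this argument: the ``only if'' direction produces the conclusion for \emph{all} non-empty subspaces, while the ``if'' direction needs it only for non-empty closed subspaces, so the two formulations are equivalent.
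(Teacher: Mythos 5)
Your proposal is correct and follows essentially the same route as the paper: the ``only if'' direction traces $U$ and $V$ through the $\theta$-weak homeomorphism and verifies $\theta$-openness of the preimage by transporting closures (then invokes Lemma~\ref{l:t-open}), and the ``if'' direction builds the same transfinite decomposition into $\theta$-open regular pieces and takes their topological sum. The only cosmetic difference is that the paper removes at each stage the union of \emph{all} $\theta$-open regular subspaces of $X_\alpha$ rather than a single chosen one, which changes nothing essential.
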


\begin{proof}[\textsl{Proof}] 
First assume that $X$ is $\theta$-weakly regular and fix any $\theta$-weak homeomorphism $h:X\to Y$ to a regular topological space $Y$.

Given any subspace $A\subset X$, we need to find a non-empty $\theta$-open regular subspace $W\subset A$. Since the map $h$ is $\theta$-weakly discontinuous, there exists a non-empty $\theta$-open subset $U\subset A$ such that $h|U$ is continuous. Since $h^{-1}$ is $\theta$-weakly discontinuous, the non-empty subspace  $h(U)$ of $Y$ contains a non-empty $\theta$-open subspace $V$ such that $h^{-1}|V$ is continuous. The continuity of the map $h|U$ implies that the set $W:=(h|U)^{-1}(V)$ is $\theta$-open in $U$ and hence $\theta$-open in $A$ (by Lemma~\ref{l:t-open}). The continuity of maps $h|W$ and $h^{-1}|h(W)$ implies that $h|W:W\to h(W)$ is a homeomorphism. The regularity of the topological space $Y$ implies the regularity of its subspace $h(W)$ and the regularity of the topological copy $W$ of $h(W)$. Therefore, $W$ is a required non-empty $\theta$-open regular subspace of $A$.
\smallskip

Now assume that each non-empty closed  subspace $A\subset X$ contains a non-empty $\theta$-open regular subspace. Let $A^\theta$ be the union of all $\theta$-open regular subspaces of $A$. It is clear that the subspace $A^\theta$ is $\theta$-open in $A$ and regular.  Let $X_0:=X$ and $X_\alpha=\bigcap_{\beta<\alpha}X_\beta\setminus X_\beta^\theta$ for each ordinal $\alpha$. It follows that for any ordinal $\alpha$ with $X_\alpha\ne\emptyset$ the set $X_{\alpha+1}=X_\alpha\setminus X_\alpha^\theta$ is closed in $X_\alpha$ and has non-empty complement $X_{\alpha+1}\setminus X_\alpha=X_\alpha^\theta$. Consequently, $X_\gamma=\emptyset$ for some $\gamma$ and hence $X=\bigcup_{\alpha<\gamma}X_\gamma^\theta$.

Let $Y:=\bigoplus_{\alpha<\gamma}X_\alpha^\theta$ be the topological sum of the regular spaces $X_\alpha^\theta$ for $\alpha<\gamma$. It is clear that the space $Y$ is regular and the identity map $i:Y\to X$ is continuous. We claim that the identity map $i^{-1}:X\to Y$ is $\theta$-weakly discontinuous. Given any non-empty subset $A\subset X$ find the smallest ordinal $\beta\le\gamma$ such that $A\not\subset X_\beta$. Then $A\subset X_\alpha$ for all $\alpha<\beta$, which implies that $\beta$ is a successor ordinal. Write $\beta=\alpha+1$ for some $\alpha$ and observe that $U=A\cap X_\alpha^\theta=A\cap(X_\alpha\setminus X_{\alpha+1})$ is a non-empty $\theta$-open subspace of $A$ such that $i^{-1}|U$ is continuous. This means that $i^{-1}$ is $\theta$-weakly discontinuous and $i:X\to Y$ is a $\theta$-weak homeomorphism of $X$ onto the regular space $Y$.
\end{proof}

By analogy we can prove a characterization of weakly regular spaces.

\begin{engtheorem}\label{t:wr} 
A topological space $X$ is weakly regular if and only if each (closed) subspace $A\subset X$ contains a non-empty open regular subspace.
\end{engtheorem}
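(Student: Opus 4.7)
The plan is to imitate the proof of Theorem~\ref{t:twr}, with ``open'' replacing ``$\theta$-open'' throughout, and with one modest modification in the backward direction.

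For the ``only if'' part, I would fix a weak homeomorphism $h:X\to Y$ onto a regular space $Y$ and, given a non-empty subspace $A\subset X$, use the weak discontinuity of $h$ to extract a non-empty open $U\subset A$ on which $h$ is continuous. Applying the weak discontinuity of $h^{-1}$ to the non-empty subspace $h(U)\subset Y$ then supplies a non-empty open $V\subset h(U)$ on which $h^{-1}$ is continuous. Setting $W:=(h|U)^{-1}(V)$, the continuity of $h|U$ makes $W$ open in $U$ and hence open in $A$, while $h|W:W\to V$ is a continuous bijection with continuous inverse $h^{-1}|V$, so it is a homeomorphism. As $V$ inherits regularity from $Y$, its homeomorphic copy $W$ is the required non-empty open regular subspace of $A$; note that this argument even yields the stronger ``each subspace'' form of the right-hand side.

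For the ``if'' part, I would not copy the ``$A^\theta$-step'' of Theorem~\ref{t:twr} verbatim: the union of open regular subspaces of $A$ need not be regular, since a neighborhood of a point in one summand $W_i$ has closure in $A$ that may escape $W_i$---it was precisely the $\theta$-openness of the $W_i$'s in Theorem~\ref{t:twr} that trapped this closure inside $W_i$. Instead, I would pick just one open regular subspace at each stage of a transfinite recursion. Put $X_0:=X$; using the (closed-subspace) hypothesis, at every non-empty $X_\alpha$ choose a non-empty open regular subspace $U_\alpha\subset X_\alpha$ and set $X_{\alpha+1}:=X_\alpha\setminus U_\alpha$, with intersections taken at limits. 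A routine transfinite induction shows each $X_\alpha$ is closed in $X$, the $U_\alpha$'s are pairwise disjoint, and the process terminates at some ordinal $\gamma$ with $X=\bigsqcup_{\alpha<\gamma}U_\alpha$.

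Finally, I would form the topological sum $Y:=\bigoplus_{\alpha<\gamma}U_\alpha$, which is regular. The identity $i:Y\to X$ is continuous, hence weakly discontinuous. To verify that $i^{-1}:X\to Y$ is weakly discontinuous, given a non-empty $A\subset X$ I would take the smallest $\alpha$ with $A\cap U_\alpha\ne\emptyset$, observe that $A\subset X_\alpha$, and check that $V:=A\cap U_\alpha$ is a non-empty open subset of $A$ on which $i^{-1}$ is continuous (because $U_\alpha$ carries the same topology as a subspace of $X$ and as a summand of $Y$). Thus $i^{-1}$ is a weak homeomorphism onto the regular space $Y$. The main obstacle throughout is exactly the one already flagged---recognizing that Theorem~\ref{t:twr}'s ``union'' construction does not survive the passage from $\theta$-open to open and must be replaced by a one-at-a-time selection; once this is in hand, the rest is a routine transcription of the earlier argument.
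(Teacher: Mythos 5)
Your proof is correct and follows essentially the route the paper intends: the paper offers no separate argument for Theorem~\ref{t:wr} beyond the remark that it is proved ``by analogy'' with Theorem~\ref{t:twr}, and your forward direction is a verbatim transcription of that proof with ``open'' in place of ``$\theta$-open''. Your one deliberate deviation in the backward direction is well taken: the paper's step ``the union $A^\theta$ of all $\theta$-open regular subspaces is regular'' really does use $\theta$-openness (to trap closures of small neighborhoods inside a single regular piece), and for plain open sets the union of open regular subspaces can fail to be regular (Example~\ref{ex:R} gives a locally regular, nowhere regular space). Your replacement --- a transfinite recursion peeling off one non-empty open regular subspace $U_\alpha$ from each closed remainder $X_\alpha$ and summing the pieces --- is sound, and the verification that the identity $X\to\bigoplus_{\alpha<\gamma}U_\alpha$ is a weak homeomorphism goes through exactly as you describe.
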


A topological space $X$ is called
\begin{itemize}
\item {\em quasi-regular} if each non-empty open subset of $X$ contains the closure of some non-empty open set in $X$;
\item {\em hereditarily quesi-regular} if each subspace of $X$ is quesi-regular.
\end{itemize}
Theorem~\ref{t:wt-char} implies

\begin{engcorollary}\label{c:wt=>hqr}   
Each $w\theta$-regular space is hereditarily quasi-regular.
\end{engcorollary}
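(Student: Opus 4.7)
The plan is to verify the hereditary quasi-regularity condition directly on every subspace, using Theorem~\ref{t:wt-char} as the sole input. Fix a $w\theta$-regular space $X$, an arbitrary subspace $A\subset X$, and a non-empty open set $U\subset A$. I need to exhibit a non-empty open set $W\subset A$ with $\cl_A(W)\subset U$, for that is precisely the defining property of quasi-regularity applied to the subspace $A$.

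First I would invoke Theorem~\ref{t:wt-char} for the pair $(A,U)$: since $X$ is $w\theta$-regular, the non-empty open set $U$ contains a non-empty $\theta$-open subset $V$ of the space $A$. Next I would unpack the definition of $\theta$-openness at any chosen point $x\in V$: there exists an open neighborhood $W\subset A$ of $x$ such that $\cl_A(W)\subset V$. Since $V\subset U$, this $W$ is the desired non-empty open subset of $A$ whose closure in $A$ lies inside $U$.

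Because the argument is uniform in $A$ and $U$, running it for every subspace $A\subset X$ and every non-empty open $U\subset A$ shows that each subspace $A$ is quasi-regular, which is exactly the assertion that $X$ is hereditarily quasi-regular. There is no genuine obstacle in this corollary: it is essentially a rephrasing of Theorem~\ref{t:wt-char}, once one observes that a $\theta$-open subset of $A$ always contains, around any of its points, the closure (computed in $A$) of a suitable open neighborhood.
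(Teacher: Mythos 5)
Your argument is correct and is exactly the deduction the paper intends: the corollary is stated as an immediate consequence of Theorem~\ref{t:wt-char}, and your unpacking of $\theta$-openness to produce an open $W$ with $\cl_A(W)\subset V\subset U$ is the (only) missing step. Nothing further is needed.
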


Theorems~\ref{t:twr} and \ref{t:wr=>sw+twr} imply:

\begin{engcorollary}\label{cor1} 
Each scattered $T_1$-space is $\theta$-weakly regular and hence is $sw$-regular and $w\theta$-regular.
\end{engcorollary}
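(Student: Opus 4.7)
The plan is to apply Theorem~\ref{t:twr}: it suffices to show that every non-empty subspace $A\subset X$ contains a non-empty $\theta$-open regular subspace, and then invoke Theorem~\ref{t:wr=>sw+twr} to get the ``hence''.

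Fix any non-empty subspace $A\subset X$. Since $X$ is scattered, the subspace $A$ contains a point $a$ that is isolated in $A$; equivalently, the singleton $\{a\}$ is open in $A$. Because $X$ is a $T_1$-space, the singleton $\{a\}$ is also closed in $X$, hence closed in $A$. I would then observe that $\{a\}$ is in fact $\theta$-open in $A$: the neighborhood $O_a:=\{a\}$ of $a$ in $A$ satisfies $\cl_A(O_a)=\{a\}\subset\{a\}$, so the single point of $\{a\}$ has a neighborhood whose closure lies in $\{a\}$. Being a singleton, $\{a\}$ is trivially regular. Thus $\{a\}$ is the required non-empty $\theta$-open regular subspace of $A$.

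By Theorem~\ref{t:twr}, this shows that $X$ is $\theta$-weakly regular, and by Theorem~\ref{t:wr=>sw+twr} the space $X$ is both $sw$-regular and $w\theta$-regular.

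There is no real obstacle here; the argument is a one-line verification once the right theorem has been identified. The only point requiring attention is the simultaneous use of the two hypotheses: scatteredness supplies an isolated point of $A$, and the $T_1$ property promotes the open singleton $\{a\}$ to a clopen (and therefore $\theta$-open) subset of $A$. Without $T_1$ the singleton need not be closed in $A$ and the $\theta$-openness could fail.
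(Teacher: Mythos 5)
Your proof is correct and follows exactly the route the paper intends: the paper derives the corollary by citing Theorems~\ref{t:twr} and \ref{t:wr=>sw+twr} without further detail, and your argument supplies precisely the missing one-line verification (an isolated point of $A$ gives an open singleton, which $T_1$ makes closed and hence $\theta$-open in $A$, and which is trivially regular). Your remark on where the $T_1$ hypothesis enters also matches the paper's Example~\ref{ex}, which shows the hypothesis is essential.
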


The $T_1$-requirement in Corollary~\ref{cor1} is essential as shown by the following example.

\begin{engexample}\label{ex} Consider the connected doubleton $D=\{0,1\}$ endowed with the topology $\big\{\varnothing,\{0\},\{0,1\}\big\}$. It is clear that $D$ is a scattered space. The function
$f\colon \mathbb R\to D$ defined by
$$f(x)=\begin{cases}
1&\mbox{if $x\in\mathbb Q$};\\
0&\mbox{if $x\notin\mathbb Q$}
\end{cases}
$$is scatteredly continuous but not weakly discontinuous as $C(f)=\mathbb Q$ has empty interior in $\mathbb R$. Consequently, $D$ is not $sw$-regular and hence not $\theta$-weakly regular.

The identity map $i:D\to \{0,1\}$ to the discrete doubleton is weakly discontinuous but not $\theta$-weakly discontinuous. This means that $D$ is not $w\theta$-regular.
\end{engexample}

\begin{engdefinition} 
A topological space $X$ is {\em locally regular} if $X$ admits an open cover by regular subspaces.
\end{engdefinition}

Theorem~\ref{t:wr} implies that each locally regular space is weakly regular.

\begin{engtheorem} 
Each locally regular topological space $Y$ is $sw$-regular.
\end{engtheorem}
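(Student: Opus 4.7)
The plan is to combine scattered continuity with Bokalo's theorem applied locally inside the regular patches of the open cover. Let $f\colon Z\to Y$ be a scatteredly continuous function and let $\{Y_\alpha\}_{\alpha\in I}$ be an open cover of $Y$ by regular subspaces. To show that $f$ is weakly discontinuous, fix an arbitrary non-empty subset $A\subset Z$; I must show that $C(f|A)$ has non-empty interior in $A$.

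First, use the scattered continuity of $f$ to choose a point $x\in C(f|A)$. Pick $\alpha\in I$ with $f(x)\in Y_\alpha$. Since $Y_\alpha$ is open in $Y$ and $f|A$ is continuous at $x$, there is a neighborhood $U\subset A$ of $x$, open in $A$, such that $f(U)\subset Y_\alpha$.

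Next, consider the restriction $f|U$ as a map into $Y_\alpha$. The key (easy) observation is that since $f(U)\subset Y_\alpha$, continuity of any restriction $f|B$ at a point (for $B\subset U$) is the same whether one views the codomain as $Y$ or as $Y_\alpha$; hence $f|U\colon U\to Y_\alpha$ is scatteredly continuous. Because $Y_\alpha$ is regular, Theorem~\ref{t:Bokalo} (Bokalo) upgrades this to: $f|U\colon U\to Y_\alpha$ is weakly discontinuous. In particular $C(f|U)$ contains a non-empty subset $V$ which is open in $U$.

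Finally, transfer this back to $A$: since $U$ is open in $A$, the set $V$ is open in $A$, and continuity of $f|U$ at each $v\in V$ implies continuity of $f|A$ at $v$ (an open-in-$U$ neighborhood of $v$ is automatically open in $A$). Thus $V\subset C(f|A)$, proving that $C(f|A)$ has non-empty interior in $A$. No step is really delicate; the only thing to notice is that scattered continuity of $f$ lets us locate \emph{some} continuity point whose image sits in a regular patch, which is exactly what is needed to apply Bokalo's theorem on a neighborhood and then return to the global picture.
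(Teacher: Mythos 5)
Your proposal is correct and follows essentially the same route as the paper: locate a continuity point of $f|A$ via scattered continuity, push a neighborhood of it into a regular patch of the cover, and apply Bokalo's theorem (Theorem~\ref{t:Bokalo}) on that neighborhood to get a non-empty open set of continuity points. The paper leaves the appeal to Theorem~\ref{t:Bokalo} and the codomain-restriction check implicit, but the argument is the same.
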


\begin{proof}[\textsl{Proof}] 
Given a scatteredly continuous map $f:X\to Y$ and a non-empty subset $A\subset X$, we should show that the set $C(f|A)$ has non-empty interior in $A$.

By the scattered continuity of $f$, the map $f|A$ has a continuity point $a\in A$. By our assumption, the point $f(a)$ is contained in an open regular subspace $U\subset Y$. By the continuity of $f$ at $a$, there exists an open neighborhood $O_a\subset A$ of $a$ such that $f(O_a)\subset U$. Since $U$ is regular, the set $C(f|O_a)$ has non-empty interior in $O_a$ and then the set $C(f)\supset C(f|O_a)$ has non-empty interior in $A$.
\end{proof}

\begin{engexample}\label{ex:local} 
On the real line $\IR$ consider the Euclidean topology $\tau_E$ and the topology $\tau$ generated by the subbase
$$\tau_E\cup\{W_n:n\in\w\}\mbox{ \ where \ }W_n=\IR\setminus\left\{\tfrac1{2^k3^m}:m\in\w,\;k\ge n\right\}.$$
It can be shown that the space $X=(\IR,\tau)$ is $\theta$-weakly regular but not locally regular.
\end{engexample}

A topological space $X$ is called {\em regular at a point} $x\in X$ if any neighborhoodof $x$ in $X$ contains a closed neighborhood of $x$ in $X$.
A topological space $X$ is called {\em nowhere regular} if $X$ is not regular at each point $x\in X$. %Theorem~\ref{t:wt-char} implies:

\begin{engexample}\label{ex:R} 
Let $\tau_E$ be the Euclidean topology of the real line and $\tau$ be the topology generated by the subbase $$\{(U\cap\IQ)\cup\{x\}:x\in U\in\tau_E\}.$$The space $(\IR,\tau)$ is locally regular and hence $sw$-regular. On the other hand, it is nowhere regular, not quasi-regular and not $w\theta$-regular.
\end{engexample}

Now, we describe the smallest non-regular first-countable Hausdorff space, which is called the Gutik hedgehog. The {\em Gutik hedgehog} is the space $\IN^{\le 2}=\IN^0\cup\IN^1\cup\IN^2$ endowed with the topology generated by the base 
$$\{\{x\}:x\in \IN^2\}\cup\{U_n:n\in\IN\}\cup\{U_{n,m}:n,m\in\IN\}$$where
$$U_n=\{\emptyset\}\cup\{(i,j)\in\IN^2:i\ge n\}\mbox{ and }U_{n,m}=\{(n)\}\cup\{(n,j):j\ge m\}\subset \IN^1\cup\IN^2$$for $n,m\in\w$. Here $\emptyset$ is the unique element of the set $\IN^0$. For the first time, the Gutik hedgehog has appeared in the paper \cite{GP} of Gutik and Pavlyk.

%$$H:=\{(0,0)\}\cup\{(\tfrac1n,0):n\in\mathbb N\}\cup\{(\tfrac1n,\tfrac1{nm}):n,m\in\mathbb N\},$$ endowed with the topology $\tau$ consisting of sets $U\subset H$ satisfying the following two conditions:
%\begin{enumerate}
%\item if $(\frac1n,0)\in U$ for some $n\in\IN$, then there exists $m\in\IN$ such that $(\frac1n,\frac1{nk})\in U$ for all $k\ge m$;
%\item if $(0,0)\in U$, then there exists $m\in\IN$ such that $(\frac1n,\frac1{nk})\in U$ for all $n\ge m$ and all $k\in\IN$.
%\end{enumerate}

The following properties of the Gutik hedgehog can be derived from its definition.

\begin{englemma} The Gutik hedgehog is first-countable, scattered and locally regular, but not regular.
\end{englemma}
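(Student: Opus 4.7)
The plan is to verify the four assertions one by one; each follows fairly directly from the description of the base.

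For first-countability, it suffices to observe that every point of $\IN^2$ is isolated, the countable collection $\{U_{n,m}:m\in\IN\}$ is a neighborhood base at $(n)\in\IN^1$, and $\{U_n:n\in\IN\}$ is a neighborhood base at $\emptyset$. For scatteredness, given a non-empty $A\subset\IN^{\le 2}$ I split into cases: if $A\cap\IN^2\neq\emptyset$, any such point is isolated in $X$ and hence in $A$; otherwise $A\subset\IN^0\cup\IN^1$, and if some $(n)\in A$ then $U_{n,1}\cap A=\{(n)\}$ since $U_{n,1}\setminus\{(n)\}\subset\IN^2$ misses $A$; the remaining case $A=\{\emptyset\}$ is trivial.

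For local regularity I propose the open cover $\{U_1\}\cup\{U_{n,1}:n\in\IN\}$. This really is a cover: $U_1=\{\emptyset\}\cup\IN^2$ takes care of $\IN^0\cup\IN^2$, and $U_{n,1}\ni(n)$ takes care of $\IN^1$. Each $U_{n,1}$ is a convergent sequence together with its limit, since every basic neighborhood $U_{n,m}$ of $(n)$ contains the tail $\{(n,j):j\ge m\}$; such a space is compact metrizable, hence regular. For $U_1$, the points of $\IN^2$ are isolated in $U_1$, so regularity only needs to be checked at $\emptyset$: a basic neighborhood of $\emptyset$ in $U_1$ has the form $U_k$, and $U_{k+1}$ is a closed neighborhood of $\emptyset$ in $U_1$, because $U_1\setminus U_{k+1}=\{(i,j)\in\IN^2:1\le i\le k\}$ is a union of isolated points and therefore open in $U_1$.

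For non-regularity I will exhibit a closed set that cannot be separated from $\emptyset$. Take $F=\IN^1$; this is closed because its complement is $\{\emptyset\}\cup\IN^2=U_1$, which is open. Now any open $O\ni\emptyset$ contains some $U_n$, and any open $V\supset F$ must contain, for every $i\ge n$, a basic neighborhood $U_{i,m_i}\subset V$ of $(i)$; then $(i,j)\in U_n\cap U_{i,m_i}\subset O\cap V$ for every $j\ge m_i$, so $O\cap V\neq\emptyset$. Hence $\emptyset$ and $F$ admit no disjoint open neighborhoods and $X$ is not regular.

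The most delicate step looks to be the regularity of $U_1$: closures in the ambient space $X$ do reach up into $\IN^1$ (for instance $\overline{U_k}$ contains every $(i)$ with $i\ge k$), but those $\IN^1$-points are simply absent from the subspace $U_1$, and the key observation is that the complement $U_1\setminus U_{k+1}$ consists entirely of isolated points, which is what makes $U_{k+1}$ closed in $U_1$.
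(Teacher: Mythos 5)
Your proof is correct, and since the paper states this lemma without proof (merely noting the properties "can be derived from its definition"), your direct case-by-case verification is exactly the intended argument. All four checks are sound, including the two delicate points: that $U_1\setminus U_{k+1}$ consists of isolated points (so $U_{k+1}$ is closed in the subspace $U_1$), and that every neighborhood $U_n$ of $\emptyset$ accumulates at the points $(i)$, $i\ge n$, of the closed set $\IN^1$.
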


Moreover, the following theorem shows that the Gutik hedgehog is the smallest space among non-regular first-countable spaces.

\begin{engtheorem} A first-countable Hausdorff space $X$ is not regular if and only if $X$ contains a topological copy of the Gutik hedgehog.
\end{engtheorem}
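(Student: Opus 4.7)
The plan splits into the two directions. The easy implication --- $X$ containing a copy of the Gutik hedgehog implies $X$ is non-regular --- reduces, by heredity of regularity, to checking that the Gutik hedgehog itself is not regular. This I would verify directly: the closed set $\IN^1$ and the point $\emptyset$ cannot be separated by disjoint open sets, because any open set containing $(n)$ must absorb a tail $\{(n,j):j\ge m(n)\}$, so its union over all $n$ meets every basic neighborhood $U_k$ of $\emptyset$.

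For the reverse implication, suppose $X$ is first-countable, Hausdorff, and non-regular, witnessed by a point $x_0\in X$ and an open neighborhood $W\ni x_0$ such that $\overline{U}\not\subset W$ for every open $U\ni x_0$. Fix a decreasing open neighborhood base $\{U_n\}_{n\in\IN}$ at $x_0$ with $U_1\subset W$. Hausdorffness plus first-countability at $x_0$ forces $\bigcap_n\overline{U_n}=\{x_0\}$, so the non-empty closed sets $\overline{U_n}\setminus W$ shrink to the empty set. After passing to a subsequence (re-indexed as $\{U_n\}$) I may assume $\overline{U_{n+1}}\setminus W\subsetneq\overline{U_n}\setminus W$ for every $n$, and then pick $y_n\in(\overline{U_n}\setminus\overline{U_{n+1}})\setminus W$. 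The points $y_n$ are pairwise distinct (the sets $\overline{U_n}\setminus\overline{U_{n+1}}$ are pairwise disjoint), and the sequence admits no convergent subsequence, since a hypothetical limit would lie in $\bigcap_k\overline{U_{n_k}}\cap(X\setminus W)\subset\{x_0\}\cap(X\setminus W)=\emptyset$. Hence, by first-countability, $\{y_n:n\in\IN\}$ is closed discrete in $X$.

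Next I would grow the hairs: for each $n$, since $y_n\in\overline{U_n}$ while $X\setminus\overline{U_{n+1}}$ is an open neighborhood of $y_n$, every neighborhood of $y_n$ meets the open set $U_n\setminus\overline{U_{n+1}}$, so first-countability at $y_n$ yields distinct points $z_{n,m}\in U_n\setminus\overline{U_{n+1}}$ with $z_{n,m}\to y_n$. Setting $S:=\{x_0\}\cup\{y_n:n\in\IN\}\cup\{z_{n,m}:n,m\in\IN\}$, the natural map $\phi\colon\IN^{\le 2}\to S$ sending $\emptyset\mapsto x_0$, $(n)\mapsto y_n$, $(n,m)\mapsto z_{n,m}$ is a well-defined bijection (distinctness across different $n$ follows from the pairwise disjointness of the open sets $U_n\setminus\overline{U_{n+1}}$, and none of these sets contains $x_0$ or any $y_k$).

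It remains to check that $\phi$ is a homeomorphism by matching basic neighborhoods. The three verifications are (a) $U_k\cap S=\{x_0\}\cup\{z_{i,j}:i\ge k\}$, using that $z_{i,j}\notin\overline{U_{i+1}}\supset U_k$ whenever $k>i$ and that no $y_n$ lies in $U_k\subset W$; (b) each $z_{n,m}$ is isolated in $S$, since the open set $U_n\setminus\overline{U_{n+1}}$ meets $S$ only in $\{z_{n,j}:j\in\IN\}$ and Hausdorffness isolates $z_{n,m}$ from the remaining sequence $z_{n,j}\to y_n$; and (c) each $y_n$ admits neighborhoods in $S$ of the form $\{y_n\}\cup\{z_{n,j}:j\ge m\}$. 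I expect (c) to be the main technical point, because $y_n$ must be peeled off simultaneously from countably many neighbour families; however the single open set $X\setminus\overline{U_{n+1}}$ disposes of all $z_{i,j}$ with $i>n$ in one stroke, and only finitely many convergent sequences $\{z_{i,j}\}_j\to y_i$ with $i<n$ remain, each of which can be cut off from $y_n$ by Hausdorffness on its tail and a finite trimming on its head, together with standard separation from $x_0$ and the closed discrete set $\{y_k:k\ne n\}$.
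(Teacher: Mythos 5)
Your proof is correct and follows essentially the same construction as the paper's: pick a discrete spine of points $y_n$ lying in $\overline{U_n}$ but outside the witnessing neighborhood, attach to each a sequence of distinct points from inside $U_n$ converging to it, and check that the resulting subspace is a copy of the hedgehog. The only cosmetic difference is that you separate consecutive levels with the disjoint annuli $U_n\setminus\overline{U_{n+1}}$ (after arranging $\overline{U_{n+1}}\setminus W\subsetneq\overline{U_n}\setminus W$), whereas the paper uses auxiliary Hausdorff-separating neighborhoods $V_n$ of the spine points disjoint from deeper basic neighborhoods of $x$; your final homeomorphism verification is carried out in more detail than in the paper, which leaves it as an observation.
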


\begin{proof} The ``if'' part follows from the non-regularity of the Gutik hedgehog.
\smallskip

To prove the ``only if'' part, assume that a first-countable Hausdorff space $X$ is not regular at some point $x$. Then we can find a neighborhood $U_0\subset X$ of $x$ that does not contain the closure of any neighborhood $V$ of $x$. Fix a neighborhood base $\{U_n\}_{n\in\IN}$ at $x$ such that $U_n\subset U_{n-1}$ for all $n\in\IN$. Let $k_1=0$, choose any point $x_1\in \overline{U}_{k_1}\setminus U_0$,  and using the Hausdorff property of $X$, find a neighborhood $V_1$ of $x_1$ such that $V_1\cap U_{k_2}=\emptyset$ for some number $k_2>k_1$. 

Proceeding by induction, we can choose an increasing number  sequence $(k_n)_{n\in\w}$ and a sequence $(x_n)_{n\in\IN}$ of points in $X$ such that for every $n\in\IN$, the point $x_n$ belongs to $\overline{U}_{k_n}\setminus U_0$ and has an open neighborhood $V_n$, disjoint with the neighborhood $U_{k_{n+1}}$ of $x$. Observe that for every $i<n$, we have $$x_n\in \overline{U}_{k_n}\subset\overline{U}_{k_i}\subset X\setminus V_i\subset X\setminus\{x_i\},$$ which implies that $x_n\notin\{x_i\}_{i<n}$. Replacing $V_n$ by a smaller neighborhood of $x_n$, we can assume that its closure $\overline{V}_n$ does not contain the points $x_1,\dots,x_{n-1}$.

Since $X$ is first-countable, for every $n\in\IN$ we can choose a sequence $\{x_{n,i}\}_{i\in\IN}$ of pairwise distinct points in $V_n\cap U_{k_n}$ that converge to $x_n$. %Moreover, we can assume that $x_{n,m}\notin \overline{V_m}$ for every $m>n$. 
Observe that for any $n<m$ the sets $\overline{U}_{k_{n+1}}\supset\overline{U}_{k_m}\supset\{x_{m,i}\}_{i\in\IN}$ and $V_n\supset \{x_{n,i}\}_{i\in\IN}$ are disjoint, which implies that the points $x_{n,i}$, $n,i\in\IN$, are pairwise disjoint.  Consider the subspace $\tilde H:=\{x\}\cup\{x_n:n\in\IN\}\cup\{x_{n,i}:n,i\in\IN\}$ and observe that the map $h:H\to \tilde H$, defined by $h(\emptyset)=x$, $h(n)=x_n$ and $h(n,m)=x_{n,m}$ for $n,m\in\IN$, is a homeomorphism.
\end{proof}

Finally let us draw a diagram of all provable implications between various regularity properties.
$$\xymatrix{
&\mbox{regular}\ar@{=>}[d]&\\
\mbox{$sw$-regular}&\mbox{$\theta$-weakly regular}\ar@{=>}[l]\ar@{=>}[d]\ar@{=>}[r]&\mbox{$w\theta$-regular}\ar@{=>}[d]\\
\mbox{locally regular}\ar@{=>}[r]\ar@{=>}[u]&\mbox{weakly regular}&\mbox{hereditarily}\atop\mbox{quasi-regular}
}
$$
\smallskip

Examples~\ref{ex:Bokalo},  \ref{ex:local} and \ref{ex:R} show that none of the implications $$
\begin{gathered}
\mbox{weakly regular $\Ra$ $sw$-regular},\\
\mbox{$\theta$-weakly regular $\Ra$ locally regular},\\
\mbox{locally regular $\Ra$ $w\theta$-regular}
\end{gathered}$$
holds in general.

\begin{engproblem} Is each $sw$-regular space weakly regular? quasi-regular?
\end{engproblem}

\begin{engproblem} Which properties in the diagram are preserved by products?
\end{engproblem}

\noindent{\bf Acknowledgements.} The authors express their sincere thanks to Alex Ravsky for careful reading the paper and many valuable suggestions improving the presentation.
\newpage


\begin{thebibliography}{11}

\bibitem{AB} 
A. V.~Arkhangelskii and B. M.~Bokalo, {\em Tangency
of topologies and tangential properties of  topological spaces},
 Trans. Mosk. Math. Soc. \textbf{1993} (1993), 139--163.
%
\bibitem{Ba} R. Baire,
 \emph{Sur les fonctions de variables reelles}, Annali di Mat. (3) {\bf 3} (1899), no. 1, 1--123.
DOI: 10.1007/BF02419243 
%
\bibitem{BB} 
T.~Banakh and B.~Bokalo,
{\em On scatteredly continuous maps between topological spaces},
Topology Appl. {\bf 157} (2010), no. 1, 108--122.
DOI: 10.1016/j.topol.2009.04.043
%
\bibitem{BB1} T.~Banakh and B.~Bokalo,
{\em Weakly discontinuous and resolvable functions between topological spaces}, 
Hacet. J. Math. Stat. {\bf 46} (2017), no. 1, 103--110.
DOI: 10.15672/HJMS.2016.399
%
\bibitem{BBK2016}  T.~Banakh, B.~Bokalo, and N.~Kolos,
{\em Topological properties preserved by weakly discontinuous maps and weak homeomorphisms}, 
Topology Appl. {\bf 221} (2017), 91--106. \newline
DOI: 10.1016/j.topol.2017.02.036
%
\bibitem{BK1} B. M.~Bokalo and N. M.~Kolos, 
{\em On operations on some classes of discontinuous functions}, 
Carpathian Math. Publ.  {\bf 3} (2011), no. 2, 36--48.

\bibitem{BK2} B.~Bokalo and N.~Kolos, 
{\em When does $SC_p(X)=\mathbb{R}^X$ hold?}, 
Topology 
{\bf 48} (2009), no. 2--4, 178--181.
DOI: 10.1016/j.top.2009.11.016
%
\bibitem{BM} B.~Bokalo, O.~Malanyuk, On almost continuous mappings (in Ukrainian),  Mat. Stud. {\bf 9} (1995), no. 1, 90--93.
%

\bibitem{GP} O.~Gutik, K.~Pavlyk, {\em On pseudocompact topological Brandt $\lambda^0$-extensions of semitopological monoids}, Topological Algebra Appl. {\bf 1} (2013) 60--79.

\bibitem{HP} 
L.~Hol\'a and Z.~Piotrowski, 
{\em Set of continuity points of functions with values in generalized metric spaces}, 
Tatra Mt. Math. Publ. {\bf 42} (2009), 149--160.
%
\bibitem{Kirch} B.~Kirchheim, \; {\em Baire one star functions},  
\; Real Anal. Exchange {\bf 18} (1992/93), no. 2, 385--389.
%
\bibitem{Vino} V. A.~Vinokurov, {\em Strong regularizability of discontinuous functions}, Dokl. Akad.
Nauk SSSR  {\bf 281} (1985), no. 2,  265--269 (Russian).



\end{thebibliography}
\end{document}